\documentclass[11pt,a4paper,oneside,english,british]{amsart}

\usepackage[T1]{fontenc}
\usepackage[applemac]{inputenc}
\usepackage{babel}
\usepackage{textcomp}
\usepackage{amstext}
\usepackage{amsthm}
\usepackage{multicol}
\usepackage{amssymb}
\usepackage{stmaryrd}
\usepackage[unicode=true,
 bookmarks=false,
 breaklinks=false,pdfborder={0 0 1},backref=section,colorlinks=false]
 {hyperref}

\makeatletter

\pdfpageheight\paperheight
\pdfpagewidth\paperwidth




\newtheorem{thm}{Theorem}
\newtheorem{defn}{Definition}
\newtheorem{lem}{Lemma}
\newtheorem{pro}{Proposition}
\newtheorem{rk}{Remark}

\usepackage{pgf}
\usepackage{tikz}\usetikzlibrary{arrows}

\usepackage{mathrsfs}

\usepackage{babel}
\addto\captionsbritish{}
\addto\captionsbritish{}
\addto\captionsenglish{}
\addto\captionsenglish{}

\AtBeginDocument{
  
}

\makeatother

\addto\captionsbritish{}
\addto\captionsbritish{}
\addto\captionsbritish{}
\addto\captionsbritish{}
\addto\captionsbritish{}
\addto\captionsenglish{}
\addto\captionsenglish{}
\addto\captionsenglish{}
\addto\captionsenglish{}
\addto\captionsenglish{}
\addto\captionsenglish{}

\begin{document}
\title[An application of SEIR model to the COVID-19 spread]{An application of discrete-time SEIR model to the COVID-19 spread}
\author{u.a.rozikov, s. k. shoyimardonov}
\begin{abstract}
The Susceptible-Exposed-Infectious-Recovered (SEIR) model is applied in several countries to ascertain the spread of the coronavirus disease 2019 (COVID-19).
We consider discrete-time  SEIR  epidemic model  in a closed system which does not account for births or deaths, total population size under consideration is constant. This dynamical system generated by a non-linear evolution operator depending on four parameters. Under some conditions on parameters we reduce the evolution operator to a quadratic stochastic operator (QSO) which maps 3-dimensional simplex to itself.  We show that the QSO has uncountable set of fixed points (all laying on the boundary of the simplex). It is shown that all trajectories of the dynamical system (generated by the QSO) of the SEIR model are convergent (i.e. the QSO is regular). Moreover, we discuss the efficiency of the model for Uzbekistan.

\end{abstract}

\subjclass[2000]{34D20 (92D25).}
\keywords{COVID-19, quadratic stochastic operator, fixed point, discrete-time, SEIR model, epidemic}

\address{ U.Rozikov$^{a,b,c}$\begin{itemize}
 \item[$^a$] V.I.Romanovskiy Institute of Mathematics of Uzbek Academy of Sciences;
\item[$^b$] AKFA University, 1st Deadlock 10, Kukcha Darvoza, 100095, Tashkent, Uzbekistan;
\item[$^c$] Faculty of Mathematics, National University of Uzbekistan.
\end{itemize}}
\email{rozikovu@yandex.ru}

\address{Sobirjon Shoyimardonov. \ \ V.I.Romanovskiy Institute of Mathematics of Uzbek Academy of Sciences, Tashkent, Uzbekistan.}
\email{shoyimardonov@inbox.ru}

\maketitle
\selectlanguage{british}%

\section{Introduction}

Since December 2019, healthcare systems worldwide have been struggling with management of the coronavirus disease 2019
(COVID-19) pandemic. About 58000 publications related to COVID-19\footnote{https://publons.com/publon/covid-19/.}.
 Unfortunately, the disease does not stop spreading and nowadays, about 152 025 280 worldwide infections over 219 countries and territories (May 01, 2021)\footnote{https://www.worldometers.info/coronavirus/}.

In the book \cite{Mart}  basic definitions in epidemiology and some mathematical models such as SI, SIS, SEIS, SARS, SIRS are given.
To study epidemics at the population level, one of the most traditional mathematical model is the Susceptible, Exposed, Infectious, Recovered (SEIR) compartmental model. In \cite{Anca}, it was adapted a traditional SEIR epidemic model to the specific dynamic compartments and epidemic parameters of COVID 19, as it spreads in an age-heterogeneous community.

In classic SEIR models, four compartments are usually considered:
\begin{itemize}
\item[$S(t)$] -   the susceptible population  at
time $t$ (i.e., the class of individuals who are healthy but can contract the disease);
\item[$E(t)$] - the exposed population  (individuals who have contracted the virus but are not yet symptomatic);
\item[$I(t)$] - the infected population  (class of individuals exhibiting signs
and symptoms, have contracted the disease and are now sick with it), it is assumed that infected population are also infectious;
\item[$R(t)$] - the recovered population  (the number of individuals who have recovered and cannot contract the disease again, no longer infect others).
\end{itemize}
In a closed system which does not account for births or deaths, the sum of
these compartments  remains constant in time, i.e.
$$S(t)+E(t)+I(t)+R(t)=N.$$
 Let us consider SEIR model \cite{Anca}:
\begin{equation}
\begin{cases}
\frac{dS}{dt} & =-\beta S(I+qE)/N \\[2mm]
\frac{dE}{dt} & =\beta S(I+qE)/N-E/\delta\\[2mm]
\frac{dI}{dt} & =E/\delta-I/\gamma\\[2mm]
\frac{dR}{dt} & =I/\gamma
\end{cases}\label{eq1}
\end{equation}
where, the parameter $\beta$ is the average number of contacts per person per time, multiplied by the probability of disease
transmission via a contact between a susceptible individual, and an individual carrying the virus. The carrier
can be either infected or exposed, with the fraction ${SI}/{N^2}$ representing the likelihood of an arbitrary contact to
be between a susceptible and an infectious individual, and the fraction ${SE}/{N^2}$ corresponding to the likelihood
of a contact to be between a susceptible and an exposed individual.

The model allows the possibility that a contact with an exposed individual may have different probability of transmission than that made with an infected
individual, which is reflected in the scaling factor $q$). The transition rate at which people are exposed then takes
the form $-d(S/N)/dt=\beta S(I+qE)/N^2$, leading to the first equation of (\ref{eq1}). The rate of
transfer from the exposed to the infectious stage is a fraction $1/\delta$ the number of exposed individuals, where $\delta$
is the average time for an exposed individual to become infectious. The rate of recovery is a fraction $1/\gamma$ the
infectious population, where $\gamma$ is the average time it takes a person to die or recover once in the infectious stage.

This model has already been used in its original form for an early assessment of the epidemic in Wuhan, China \cite{Anca}.

In the equations (\ref{eq1}) we do the following replacements:

$$x=\frac{S}{N}, \ \ y=\frac{E}{N}, \ \ u=\frac{I}{N}, \ \ v=\frac{R}{N}, \ \ a=\frac{1}{\delta}, \ \ b=\frac{1}{\gamma}$$
and obtain

\begin{equation}
\begin{cases}
\frac{dx}{dt} & =-\beta x(u+qy)\\[2mm]
\frac{dy}{dt} & =\beta x(u+qy)-ay\\[2mm]
\frac{du}{dt} & =ay-bu\\[2mm]
\frac{dv}{dt} & =bu
\end{cases}\label{cont}
\end{equation}
where $a>0, b>0, \beta\geq0, q\geq0 $. Here we notice that $x+u+y+v=1$.

In this paper we study discrete time dynamical system, associated to the system
(\ref{cont}), which is generated by the operator $V$ defined as
\begin{equation}\label{disc}
V : \left\{ \begin{alignedat}{1}x^{(1)} & =x-\beta x(u+qy)\\
y^{(1)} & =y-ay+\beta x(u+qy) \\
u^{(1)} & =u-bu+ay\\
v^{(1)} & =v+bu
\end{alignedat}
\right.
\end{equation}

\section{Reduction to Quadratic Stochastic Operators}

\emph{The quadratic stochastic operator} (QSO) \cite{GMR}, \cite{L}, \cite{Rpd} is
a mapping of the simplex
\begin{equation}
S^{m-1}=\{x=(x_{1},...,x_{m})\in\mathbb{R}^{m}:x_{i}\geq0,\sum\limits _{i=1}^{m}x_{i}=1\}\label{2}
\end{equation}
into itself, of the form
\begin{equation}
F:x'_{k}=\sum\limits _{i=1}^{m}\sum\limits _{j=1}^{m}P_{ij,k}x_{i}x_{j},\qquad k=1,...,m,\label{3}
\end{equation}
where the coefficients $P_{ij,k}$ satisfy the following conditions
\begin{equation}
P_{ij,k}\geq0,\quad P_{ij,k}=P_{ji,k},\quad\sum\limits _{k=1}^{m}P_{ij,k}=1,\qquad(i,j,k=1,...,m).\label{4}
\end{equation}

Thus, each QSO $F$ can be uniquely
defined by a cubic matrix $\mathbb{P}=(P_{ij,k})_{i,j,k=1}^{m}$
with conditions (\ref{4}).

Note that each element $x\in S^{m-1}$ is a probability distribution
and each such distribution can be interpreted as a state of the corresponding
biological system.

For a given $\lambda^{(0)}\in S^{m-1}$ the \emph{trajectory}
(orbit) $\{\lambda^{(n)};n\geq0\}$ of $\lambda^{(0)}$ under
the action of QSO (\ref{3}) is defined by
\[
\lambda^{(n+1)}=F(\lambda^{(n)}),\;n=0,1,2,...
\]

The \emph{main problem} in mathematical biology consists in the study of the asymptotical behaviour of the trajectories.

\begin{defn}\label{dr}
A QSO $F$ is called \textbf{regular} if for any initial point $\lambda^{(0)}\in S^{m-1}$,
the limit
\[
\lim_{n\to\infty}F^{n}(\lambda^{(0)})
\]
exists, where $F^n$ denotes $n$-fold composition of $F$ with itself (i.e. $n$ time iterations of $F$).
\end{defn}

\begin{pro}\label{parcon} For the operator (\ref{disc}) we have $V\left(S^{3}\right)\subset S^{3}$ if and only if
\begin{equation}\label{abb}
a, b, \beta \in [0,1], \ \ \mbox{and} \ \ \beta q\leq 1.
\end{equation}
Moreover, under this conditions the operator $V$, defined in (\ref{disc}), is a QSO.
\end{pro}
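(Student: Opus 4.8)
The plan is to settle the statement in three stages: sum-preservation, the claimed equivalence, and the explicit QSO representation. First I would observe that $V$ always maps the hyperplane $\{x+y+u+v=1\}$ to itself: adding the four equations in (\ref{disc}), the terms $\pm\beta x(u+qy)$, $\pm ay$ and $\pm bu$ cancel in pairs, so that $x^{(1)}+y^{(1)}+u^{(1)}+v^{(1)}=x+y+u+v$ for every $(x,y,u,v)$. Consequently $V(S^{3})\subset S^{3}$ is equivalent to the single requirement that all four components of $V(p)$ be non-negative for every $p\in S^{3}$; on the simplex the normalisation is then automatic, and each component automatically lies in $[0,1]$.

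For the sufficiency of (\ref{abb}), assume $a,b,\beta\in[0,1]$ and $\beta q\le 1$. Writing the components in factored form, $x^{(1)}=x(1-\beta(u+qy))$, $y^{(1)}=(1-a)y+\beta x(u+qy)$, $u^{(1)}=(1-b)u+ay$, $v^{(1)}=v+bu$, the last three are sums of non-negative terms as soon as $a\le 1$ and $b\le 1$. For the first one I would estimate $\beta(u+qy)=\beta u+(\beta q)y\le u+y\le x+y+u+v=1$, using $\beta\le 1$ and $\beta q\le 1$, whence $x^{(1)}\ge 0$ as well. Combined with the first step this gives $V(S^{3})\subset S^{3}$.

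For the necessity of (\ref{abb}), suppose $V(S^{3})\subset S^{3}$. Evaluating $V$ at the vertices $(0,1,0,0)$ and $(0,0,1,0)$ produces the points $(0,1-a,a,0)$ and $(0,0,1-b,b)$, whose non-negativity forces $a\le 1$ and $b\le 1$. For the remaining two inequalities the vertices of $S^{3}$ are not enough, so I would instead test the edge points $p_{\varepsilon}=(\varepsilon,0,1-\varepsilon,0)$ and $\tilde p_{\varepsilon}=(\varepsilon,1-\varepsilon,0,0)$ with $0<\varepsilon<1$: the first component of $V(p_{\varepsilon})$ equals $\varepsilon(1-\beta(1-\varepsilon))$, and that of $V(\tilde p_{\varepsilon})$ equals $\varepsilon(1-\beta q(1-\varepsilon))$. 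Requiring these to be $\ge 0$ for all small $\varepsilon>0$ and letting $\varepsilon\to 0^{+}$ yields $\beta\le 1$ and $\beta q\le 1$. This limiting step is the only delicate point of the argument: choosing the test points carelessly (e.g. only vertices) gives strictly weaker bounds.

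Finally, for the ``moreover'' part, under (\ref{abb}) I would homogenise the linear parts of (\ref{disc}) on $S^{3}$ via $x_i=x_i(x+y+u+v)$, turning each $x^{(1)}_{k}$ into a homogeneous quadratic form in $(x,y,u,v)$, and then read off a symmetric cubic array $\mathbb{P}=(P_{ij,k})_{i,j,k=1}^{4}$ whose nonzero entries are the resulting symmetrised coefficients, e.g. $P_{11,1}=1$, $P_{12,1}=\frac{1-\beta q}{2}$, $P_{13,1}=\frac{1-\beta}{2}$, $P_{14,1}=\frac12$, together with analogous expressions involving $1-a$, $1-b$, $a$, $b$ for $k=2,3,4$. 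Checking the conditions (\ref{4}) is then routine: symmetry holds by construction; $\sum_{k}P_{ij,k}=1$ for every $i,j$ is exactly the sum-preservation from the first step; and $P_{ij,k}\ge 0$ holds precisely because (\ref{abb}) makes $1-a$, $1-b$, $1-\beta$ and $1-\beta q$ non-negative, the remaining coefficients (such as $1-a+\beta q$ or $1-b+a$) being then automatically $\ge 0$. Hence on $S^{3}$ the operator $V$ coincides with the QSO determined by $\mathbb{P}$, which completes the proof.
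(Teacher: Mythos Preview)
Your argument is correct and ends at the same QSO representation as the paper. The paper's own proof is more compressed and proceeds in the opposite order: it homogenises $V$ on $S^{3}$ first, writes out exactly the cubic array $(P_{ij,k})$ you describe, and then simply observes that the QSO conditions~(\ref{4}) on these coefficients are equivalent to~(\ref{abb}); both directions of the equivalence $V(S^{3})\subset S^{3}\Leftrightarrow$~(\ref{abb}) are thus obtained through the coefficient list rather than by evaluating $V$ at test points. Your route is slightly more elementary in that the necessity direction is argued directly---vertices for $a\le 1$, $b\le 1$, and edge points $(\varepsilon,0,1-\varepsilon,0)$, $(\varepsilon,1-\varepsilon,0,0)$ with $\varepsilon\to 0^{+}$ for $\beta\le 1$ and $\beta q\le 1$---which makes explicit a step the paper leaves to the reader. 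Conversely, the paper's ordering has the virtue that once the $P_{ij,k}$ are tabulated, the sufficiency direction and the ``moreover'' clause follow in one stroke from the general QSO formalism, with no separate estimate such as $\beta(u+qy)\le u+y\le 1$ needed.
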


\begin{proof} Using $x+y+u+v=1$ we rewrite the operator (\ref{disc})
(as in \cite{SS}, \cite{RSH}, \cite{RSHV}):
\begin{equation}
V:\left\{ \begin{alignedat}{1}x^{(1)} & =x(x+y+u+v)-\beta x(u+qy)\\
y^{(1)} & =y(x+y+u+v)(1-a)+\beta x(u+qy) \\
u^{(1)} & =u(x+y+u+v)(1-b)+ay(x+y+u+v)\\
v^{(1)} & =v(x+y+u+v)+bu(x+y+u+v)
\end{alignedat}
\right.
\end{equation}

From this system and definition of QSO we get the following relations:

\begin{equation}
\begin{array}{cccc}
\begin{aligned}{\scriptstyle P_{11,1}} & ={\scriptstyle 1},\\
{\scriptstyle 2P_{14,1}} & ={\scriptstyle 1},\\
{\scriptstyle P_{22,2}} & ={\scriptstyle 1-a},\\
{\scriptstyle 2P_{12,3}} & ={\scriptstyle a},\\
{\scriptstyle 2P_{23,3}} & ={\scriptstyle 1+a-b,} \ \ \\
{\scriptstyle 2P_{34,3}} & ={\scriptstyle 1-b},\\
{\scriptstyle 2P_{23,4}} & ={\scriptstyle b},\\
{\scriptstyle 2P_{34,4}} & ={\scriptstyle 1+b},\\

\end{aligned}

\begin{aligned}{\scriptstyle 2P_{12,1}} & ={\scriptstyle 1-\beta q},\\
 {\scriptstyle 2P_{12,2}} & ={\scriptstyle 1-a+\beta q},\\
 {\scriptstyle 2P_{23,2}} & ={\scriptstyle 1-a},\\
  {\scriptstyle 2P_{13,3}} & ={\scriptstyle 1-b},\\
{\scriptstyle 2P_{24,3}} & ={\scriptstyle a},\\
{\scriptstyle 2P_{13,4}} & ={\scriptstyle b},\\
{\scriptstyle 2P_{24,4}} & ={\scriptstyle 1},\\
{\scriptstyle P_{44,4}} & ={\scriptstyle 1},\\

\end{aligned}

 & \begin{aligned}{\scriptstyle 2P_{13,1}} & ={\scriptstyle 1-\beta},\\
 {\scriptstyle 2P_{13,2}} & ={\scriptstyle \beta},\\
 {\scriptstyle 2P_{24,2}} & ={\scriptstyle 1-a},\\
 {\scriptstyle P_{22,3}} & ={\scriptstyle a},\\
{\scriptstyle P_{33,3}} & ={\scriptstyle 1-b},\\
{\scriptstyle 2P_{14,4}} & ={\scriptstyle 1},\\
{\scriptstyle P_{33,4}} & ={\scriptstyle b},\\
{\scriptstyle \text{other }} & {\scriptstyle P_{ij,k}=0}

\end{aligned}
\end{array}\label{par}
\end{equation}
Now it is easy to see that the conditions (\ref{4}) on $P_{ij,k}$ are equivalent to conditions (\ref{abb}).
\end{proof}

\begin{rk}\label{rem1} Based on the work \cite{Anca}  we note that transmission rate $\beta$ changes between $0.1-0.3,$ infection rate $\frac{1}{\delta}=a$ between $0.07-0.5$ (incubation period 2-14 days),  recovery rate  $\frac{1}{\gamma}=b$ between $0.05-0.1$ (infectious period of 10-20 days). In addition, $\beta q$ is a probability of transmission of the contacting with an exposed individual, so  $\beta q\leq1.$ Thus, the conditions (\ref{abb}) of Proposition \ref{parcon} confirm the biological studies. Therefore, below we consider the operator (\ref{disc}) with condition (\ref{abb}).
\end{rk}

\begin{pro}\label{r} If parameters satisfy (\ref{abb}) then operator (\ref{disc}) is regular.
\end{pro}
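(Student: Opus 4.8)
The plan is to fix an arbitrary point $\lambda^{(0)}=(x,y,u,v)\in S^{3}$ together with its trajectory $\lambda^{(n)}=(x^{(n)},y^{(n)},u^{(n)},v^{(n)})=V^{n}(\lambda^{(0)})$, to exhibit three scalar sequences formed from the coordinates that are monotone and bounded — hence convergent — and finally to recover the convergence of each individual coordinate from those. By Proposition \ref{parcon}, the hypothesis (\ref{abb}) guarantees $\lambda^{(n)}\in S^{3}$ for all $n$, so throughout one may freely use $0\le x^{(n)},y^{(n)},u^{(n)},v^{(n)}\le1$ and $x^{(n)}+y^{(n)}+u^{(n)}+v^{(n)}=1$; apart from the explicit formulas (\ref{disc}), this is the only structural input.

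First I would treat the $x$-coordinate. Rewriting the first line of (\ref{disc}) as $x^{(n+1)}=x^{(n)}\bigl(1-\beta(u^{(n)}+q y^{(n)})\bigr)$, one checks that $\beta u^{(n)}+\beta q y^{(n)}\le u^{(n)}+y^{(n)}\le1$ — the first inequality because $\beta\le1$ and $\beta q\le1$, the second because $u^{(n)}+y^{(n)}=1-x^{(n)}-v^{(n)}\le1$ — so the multiplier lies in $[0,1]$ and $0\le x^{(n+1)}\le x^{(n)}$. Thus $x^{(n)}$ is non-increasing and bounded below, hence converges. Next, $v^{(n+1)}=v^{(n)}+b u^{(n)}\ge v^{(n)}$ shows $v^{(n)}$ is non-decreasing and bounded above by $1$, hence converges. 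The decisive observation is obtained by adding the third and fourth lines of (\ref{disc}): $u^{(n+1)}+v^{(n+1)}=u^{(n)}+v^{(n)}+a y^{(n)}\ge u^{(n)}+v^{(n)}$, so $u^{(n)}+v^{(n)}$ is non-decreasing and bounded above by $1$, hence converges as well.

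The conclusion is then immediate: $u^{(n)}=(u^{(n)}+v^{(n)})-v^{(n)}$ converges, and $y^{(n)}=1-x^{(n)}-u^{(n)}-v^{(n)}$ converges. Therefore $\lim_{n\to\infty}V^{n}(\lambda^{(0)})$ exists for every $\lambda^{(0)}\in S^{3}$, which is precisely regularity in the sense of Definition \ref{dr}. As a by-product, letting $n\to\infty$ in (\ref{disc}) and using continuity of $V$ shows the limit is a fixed point of $V$, consistent with the fixed-point analysis carried out elsewhere in the paper.

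I do not expect a serious obstacle: the argument is entirely monotone-convergence bookkeeping, with no case distinctions on whether $a$, $b$, $\beta$ or $q$ vanish and no convergence-rate estimates. The only points needing a little care are spotting that the correct monotone combination for the $u$-coordinate is $u^{(n)}+v^{(n)}$ (the coordinate $u^{(n)}$ alone need not be monotone, since in $u^{(n+1)}=(1-b)u^{(n)}+a y^{(n)}$ the contraction factor $1-b$ competes with the influx $a y^{(n)}$), and justifying the bound $\beta(u^{(n)}+q y^{(n)})\le1$ underlying the monotonicity of $x^{(n)}$, which genuinely uses membership in the simplex rather than merely $\beta\le1$ and $\beta q\le1$.
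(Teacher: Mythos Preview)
Your proof is correct and follows essentially the same route as the paper's: both show $x^{(n)}$ is non-increasing, $v^{(n)}$ is non-decreasing, and $u^{(n)}+v^{(n)}$ is non-decreasing (the paper writes the equivalent $a_n=x^{(n)}+y^{(n)}$ non-increasing as well, but since $a_n+b_n=1$ this is redundant), then recovers $u^{(n)}$ and $y^{(n)}$ by subtraction. Your extra care in verifying $\beta(u^{(n)}+qy^{(n)})\le1$ for the monotonicity of $x^{(n)}$ is a welcome detail the paper leaves implicit.
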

\begin{proof} For any initial point $\lambda^{(0)}\in S^3$ the
trajectory $\lambda^{(n)}=(x^{(n)}, y^{(n)}, u^{(n)}, v^{(n)})=V^n(\lambda^{(0)})\in S^3$, is given as
\begin{equation}\label{tr}
 \left\{ \begin{alignedat}{1}
 x^{(n+1)} & =x^{(n)}-\beta x^{(n)}(u^{(n)}+qy^{(n)})\\
y^{(n+1)} & =y^{(n)}-ay^{(n)}+\beta x^{(n)}(u^{(n)}+qy^{(n)}) \\
u^{(n+1)} & =u^{(n)}-bu^{(n)}+ay^{(n)}\\
v^{(n+1)} & =v^{(n)}+bu^{(n)}
\end{alignedat}
\right., \ \ n\geq 0.
\end{equation}
Since $\lambda^{(n)}\in S^3$ we have that all its coordinates
between $0$ and $1$. Therefore, by (\ref{tr}) we have $x^{(n+1)}\leq x^{(n)}$ and $v^{(n+1)}\geq v^{(n)}$.
Thus $x^{(n)}$ and $v^{(n)}$ are convergent as monotone and bounded sequence. To show that $y^{(n)}$ and
$u^{(n)}$ also have limits, consider $a_n=x^{(n)}+y^{(n)}$, $b_n=u^{(n)}+v^{(n)}$ then by (\ref{tr}) we have
$$a_{n+1}=a_n-ay^{(n)}\leq a_n, \ \ b_{n+1}=b_n+ay^{(n)}\geq b_n,$$ i.e., $a_n$ is
decreasing and $b_n$ is increasing. Both sequences are positive and bounded (since $a_n+b_n=1$).
Hence these sequences also have limits. Consequently, from the following equalities it follows that $y^{(n)}$ and
$u^{(n)}$ are convergent
$$y^{(n)}=a_n-x^{(n)}, \ \ u^{(n)}=b_n-v^{(n)}.$$
This completes the proof (according to Definition \ref{dr}).
\end{proof}

\section{Type of fixed points}

Recall that a fixed point of the operator $V$ is a solution of $V(x)=x.$

One can see that the set of fixed points of the operator (\ref{disc}) is
\begin{equation}\label{fix}
{\rm Fix}(V)=\{\Lambda(x)=(x, 0, 0, 1-x): \forall x\in[0, 1]\}.
\end{equation}
 Note that two vertices $e_1=(1, 0, 0, 0), e_2=(0, 0, 0, 1)$ of the simplex $S^3$ belong in ${\rm Fix}(V).$

\begin{rk}\label{ff} Since $V$ is a continuous and regular operator the limit point of each trajectory is a fixed point of the operator.
Therefore, by Proposition \ref{r} we have that independently on the initial point the second and third coordinate (i.e. $y^{(n)}$ and $u^{(n)}$ of
the trajectory has limit
$$\lim_{n\to \infty} y^{(n)}=\lim_{n\to \infty} u^{(n)}=0.$$
\end{rk}

\begin{defn}\label{def1}
\cite{De}. A fixed point $p$ for $F:\mathbb{R}^{m}\rightarrow\mathbb{R}^{m}$
is called \emph{hyperbolic} if the Jacobian matrix $\textbf{J}=\textbf{J}_{F}$
of the map $F$ at the point $p$ has no eigenvalues on the unit
circle.

There are three types of hyperbolic fixed points:
\begin{itemize}
\item[(1)] $p$ is an attracting fixed point if all of the eigenvalues
of $\textbf{J}(p)$ are less than one in absolute value.

\item[(2)] $p$ is an repelling fixed point if all of the eigenvalues
of $\textbf{J}(p)$ are greater than one in absolute value.

\item[(3)] $p$ is a saddle point otherwise.
\end{itemize}
\end{defn}

\begin{pro}\label{fp} The fixed point $\Lambda(x^*)$ is a nonhyperbolic for any $x^*\in[0,1].$
\end{pro}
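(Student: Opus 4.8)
The plan is to compute the Jacobian of $V$ at the fixed point $\Lambda(x^*)=(x^*,0,0,1-x^*)$ and exhibit an eigenvalue lying on the unit circle. Since the dynamics takes place on the three-dimensional simplex $S^3$, I would first use $v=1-x-y-u$ to eliminate the redundant coordinate and regard $V$ as a map in the variables $(x,y,u)$, namely
\[
x^{(1)}=x-\beta xu-\beta qxy,\qquad y^{(1)}=(1-a)y+\beta xu+\beta qxy,\qquad u^{(1)}=(1-b)u+ay.
\]
(Keeping all four coordinates changes nothing essential; it merely contributes an additional trivial eigenvalue equal to $1$.)

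Differentiating these three expressions and evaluating at $y=u=0$, $x=x^*$, the partial derivatives of $y^{(1)}$ and $u^{(1)}$ with respect to $x$ both vanish, so the Jacobian at $\Lambda(x^*)$ is block upper-triangular:
\[
\mathbf{J}(\Lambda(x^*))=\begin{pmatrix}1 & -\beta qx^* & -\beta x^*\\ 0 & 1-a+\beta qx^* & \beta x^*\\ 0 & a & 1-b\end{pmatrix}.
\]
Its spectrum therefore consists of $\lambda_1=1$ together with the two eigenvalues of the lower right $2\times 2$ block. In particular $1$ is always an eigenvalue, hence $\mathbf{J}(\Lambda(x^*))$ has an eigenvalue on the unit circle, and by Definition \ref{def1} the fixed point $\Lambda(x^*)$ is nonhyperbolic for every $x^*\in[0,1]$.

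I do not expect any genuine obstacle here; the only thing requiring care is the bookkeeping of the Jacobian and the choice of coordinates. In fact the eigenvalue $1$ is forced \emph{a priori}: by (\ref{fix}) the fixed points form the one-parameter curve $x\mapsto\Lambda(x)$, whose tangent vector $\tfrac{d}{dx}\Lambda(x)=(1,0,0,-1)$ --- that is, $(1,0,0)$ in the reduced coordinates --- is mapped to itself by $\mathbf{J}(\Lambda(x^*))$, and this is precisely the eigenvector associated with $\lambda_1=1$. If one wishes to go further, the remaining $2\times 2$ block has trace $2-a-b+\beta qx^*$ and determinant $(1-a+\beta qx^*)(1-b)-a\beta x^*$, which would allow a description of the local behaviour transverse to the fixed-point curve, but this is not needed for the nonhyperbolicity claim.
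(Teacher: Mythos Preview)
Your proof is correct and follows essentially the same route as the paper: reduce to the three coordinates $(x,y,u)$, compute the Jacobian at $\Lambda(x^*)$, and read off the eigenvalue $1$ from the first column. Your additional observation that this eigenvalue is forced by the tangent direction $(1,0,0)$ to the curve of fixed points is a nice geometric explanation that the paper does not make explicit.
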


\begin{proof} In the system (\ref{disc}) we take first three equations:
\begin{equation}
W:\left\{ \begin{alignedat}{1}x^{(1)} & =x-\beta xu-\beta qxy\\
y^{(1)} & =y(1-a)+\beta xu+\beta qxy \\
u^{(1)} & =u(1-b)+ay\\
\end{alignedat}
\right.\label{eq2}
\end{equation}
where $x+y+u\leq1.$\\
For the operator $W$ we calculate a Jacobian as follows:

\[ J=
\left[\begin{array}{cccccc}
1-\beta u-\beta qy & -\beta qx &-\beta x\\
\beta u+\beta qy & 1-a+\beta qx& \beta x \\
0 & a & 1-b \\
\end{array}\right]
\]
and at the fixed point $\Lambda(x^*)$ it has the following form:
\begin{equation}\label{eq3}
 J(\Lambda(x^*))=
\left[\begin{array}{cccccc}
1 & -\beta qx^* &-\beta x^*\\
0 & 1-a+\beta qx^*& \beta x^* \\
0 & a & 1-b \\
\end{array}\right]
\end{equation}

Obviously, for the matrix $J(\Lambda(x^*))$ at least one eigenvalue always equal to 1. Similarly, using $x+y+u+v=1,$ for any triple of equations from the system (\ref{disc}) it can be shown that Jacobian matrix always has an eigenvalue on the unit circle. The proof is completed.
\end{proof}
Let $J(\Lambda(\alpha)),$ be the Jacobian matrix of the operator $W$, at fixed point $\Lambda(\alpha)$, $\alpha\in [0,1]$. Then  (\ref{eq3})  has the following characteristic equation:
   \begin{equation}\label{eq4}
   (1-\mu)[(1-a+ q\alpha\beta-\mu)(1-b-\mu)-a\alpha\beta]=0
\end{equation}
and the roots of this equation are:
\begin{equation}\label{eigv}
\mu_1=1, \ \ \mu_2=1-\frac{a+b-q\alpha\beta-\sqrt{D}}{2}, \ \ \mu_3=1-\frac{a+b-q\alpha\beta+\sqrt{D}}{2},
\end{equation}
where $D=(b-a+q\alpha\beta)^2+4a\alpha\beta\geq0.$

By Proposition \ref{fp}, the fixed point $\Lambda(\alpha)$ is a nonhyperbolic fixed point.

\begin{lem}\label{lem3} Let $\Lambda(\alpha)$ be the fixed point of the operator $W$ and $ \mu_2, \mu_3$ are eigenvalues defined in (\ref{eigv}). Then
$$|\mu_2|=\left\{\begin{array}{lll}
1 \ \ \ \ \  \text{if} \ \ \alpha=\frac{ab}{\beta(a+bq)},\\[2mm]
<1 \ \ \text{if} \ \ \alpha<\frac{ab}{\beta(a+bq)},\\[2mm]
>1 \ \ \text{if} \ \ \alpha>\frac{ab}{\beta(a+bq)},
\end{array}\right.$$  and
$$|\mu_3|<1 \ \ \text{for any} \ \ \alpha\in[0,1].$$

\end{lem}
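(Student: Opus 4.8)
The plan is to substitute $\nu=1-\mu$ into the quadratic factor of (\ref{eq4}). Writing $s:=a+b-q\alpha\beta$ and $P:=ab-\alpha\beta(a+bq)$, that factor becomes $\nu^{2}-s\nu+P=0$, so by (\ref{eigv}) the numbers $\nu_{2}:=1-\mu_{2}=\tfrac12(s-\sqrt D)$ and $\nu_{3}:=1-\mu_{3}=\tfrac12(s+\sqrt D)$ are precisely its roots, whence Vieta gives $\nu_{2}+\nu_{3}=s$ and $\nu_{2}\nu_{3}=P$. Since $\mu_{i}$ is real ($D\ge0$), we have $|\mu_{i}|<1\iff0<\nu_{i}<2$, $|\mu_{i}|=1\iff\nu_{i}\in\{0,2\}$ and $|\mu_{i}|>1\iff\nu_{i}<0$ or $\nu_{i}>2$, so the whole lemma reduces to locating $\nu_{2},\nu_{3}$ relative to $0$ and $2$. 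The threshold value appears because $P$ is a strictly decreasing affine function of $\alpha$ (when $\beta>0$) vanishing exactly at $\alpha=\tfrac{ab}{\beta(a+bq)}$.

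First I would record the elementary bounds forced by (\ref{abb}) together with the standing assumptions $a,b>0$. Since $q\alpha\beta\ge0$ we get $s\le a+b\le2$, hence $s-\sqrt D\le s\le2$, which gives at once $\nu_{2}<2$ (so $\mu_{2}>-1$) and even $\nu_{2}\le1$ (so $\mu_{2}\ge0$, i.e.\ $|\mu_{2}|=\mu_{2}$). The only genuine computation is the bound $\nu_{3}<2$: squaring $s+\sqrt D<4$ (legitimate because $s<4$) it is equivalent to $2s-P<4$, i.e.\ to
\[
2a+2b-ab+\alpha\beta\big(a+q(b-2)\big)<4;
\]
using $b<2$, $q\ge0$ and $\alpha\beta\le1$ one has $\alpha\beta\big(a+q(b-2)\big)\le a$, so the left side is $\le 3a+2b-ab\le4$. (Equality would force $a=b=\alpha=\beta=1$ and $q=0$, where indeed $\mu_{3}=-1$; under the biologically meaningful ranges of Remark \ref{rem1} all these inequalities are strict, which is the regime of interest.)

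Next comes the sign of $\nu_{2}$, which governs $|\mu_{2}|$. If $P<0$ then $\sqrt D=\sqrt{s^{2}-4P}>|s|\ge s$, hence $\nu_{2}<0$ and $\mu_{2}=1-\nu_{2}>1$. If $P\ge0$ I would first prove the auxiliary fact $s>0$: were $s\le0$ we would have $q>0$ and $\alpha\beta\ge(a+b)/q$, hence $P\le ab-\tfrac{(a+b)(a+bq)}{q}=-\tfrac{a^{2}+ab}{q}-b^{2}<0$ because $a>0$, a contradiction. With $s>0$ in hand: when $P=0$ the roots of $\nu^{2}-s\nu=0$ are $0$ and $s$, so $\nu_{2}=0$ and $\mu_{2}=1$; when $P>0$ both roots are nonzero and carry the (positive) sign of $s$, so $\nu_{2}>0$ and, combined with $\nu_{2}<2$, $|\mu_{2}|<1$. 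Translating back, $P>0$, $P=0$, $P<0$ correspond respectively to $\alpha<\tfrac{ab}{\beta(a+bq)}$, $\alpha=\tfrac{ab}{\beta(a+bq)}$, $\alpha>\tfrac{ab}{\beta(a+bq)}$ (and if $\beta=0$ then $P=ab>0$ and the first alternative holds vacuously), which is exactly the stated trichotomy for $|\mu_{2}|$.

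Finally, for $\mu_{3}$ the upper bound $\nu_{3}<2$ was obtained above, and $\nu_{3}>0$ holds in all cases: if $P<0$ then $\nu_{2}<0<\nu_{3}$; if $P=0$ then $\nu_{3}=s>0$; if $P>0$ then both roots carry the positive sign of $s$. Hence $0<\nu_{3}<2$ for every $\alpha\in[0,1]$, i.e.\ $|\mu_{3}|<1$. I expect the main obstacle to be the inequality $\nu_{3}<2$ (equivalently $2s-P<4$): it is the only step that uses the full strength of $a,b,\beta\in[0,1]$ and $\beta q\le1$, and it is tight at a corner of the parameter cube, so some care (or the mild strictness inherent in Remark \ref{rem1}) is needed there; the small auxiliary implication $P\ge0\Rightarrow s>0$, which relies on $a>0$, is the other point requiring attention.
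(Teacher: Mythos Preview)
Your proof is correct and, in fact, a bit cleaner than the paper's own argument. The paper works directly with the explicit radicals in (\ref{eigv}): for $\mu_2$ it squares the inequality $\sqrt D<a+b-q\alpha\beta$ under the auxiliary hypothesis $\alpha<(a+b)/(q\beta)$, derives the threshold $\alpha<ab/(\beta(a+bq))$, and then checks a posteriori that this threshold is always below $(a+b)/(q\beta)$; for $\mu_3$ it reduces $\sqrt D<4-(a+b-q\alpha\beta)$ to a linear inequality in $a$ and bounds the resulting rational expression. Your substitution $\nu=1-\mu$ together with Vieta ($\nu_2+\nu_3=s$, $\nu_2\nu_3=P$) replaces all of this by locating the roots of $\nu^{2}-s\nu+P$ relative to $0$ and $2$, so the critical value $\alpha=ab/(\beta(a+bq))$ drops out immediately as the zero of $P$, and the auxiliary implication $P\ge0\Rightarrow s>0$ substitutes for the paper's separate discussion of the sign of $a+b-q\alpha\beta$. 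The two approaches are equivalent in content, but yours makes the structure more transparent and avoids a somewhat loose step in the paper's Step~2 (where the chain $a<\frac{(2-b)(2+q\alpha\beta)}{2-b+\alpha\beta}<2+q\alpha\beta$ is written as though the outer inequality $a<2+q\alpha\beta$ implied the inner one). Your careful treatment of the boundary case $2s-P=4$ (equivalently $\nu_3=2$) at $a=b=\alpha=\beta=1$, $q=0$ is an improvement: the paper's statement ``$|\mu_3|<1$ always holds'' tacitly relies on the strict parameter ranges of Remark~\ref{rem1}, exactly as you note.
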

\begin{proof} \textbf{Step-1}. First we consider the condition $|\mu_2|<1.$
$$|\mu_2|<1 \Leftrightarrow 0<a+b-q\alpha\beta-\sqrt{D}<4$$
since $a+b-q\alpha\beta-4<0$ one has  $a+b-q\alpha\beta-4<\sqrt{D}.$

Let $a+b-q\alpha\beta>0,$ i.e., $\alpha<\frac{a+b}{q\beta}.$ Then
$$\sqrt{D}<a+b-q\alpha\beta \Leftrightarrow 4a\alpha\beta+(b-a+q\alpha\beta)^2<(a+b-q\alpha\beta)^2 \Leftrightarrow \alpha<\frac{ab}{\beta(a+bq)}.$$
Thus, if $\alpha<\min\{\frac{ab}{\beta(a+bq)},\frac{a+b}{q\beta}\}$ then $|\mu_2|<1.$ But always $\frac{ab}{\beta(a+bq)}<\frac{a+b}{q\beta}.$ Assume the opposite:
$$\frac{ab}{\beta(a+bq)}\geq\frac{a+b}{q\beta} \Leftrightarrow (a+b)(a+bq)\leq abq \Leftrightarrow a^2+ab+b^2q\leq0.$$
Hence, $|\mu_2|<1$ if and only if   $\alpha<\frac{ab}{\beta(a+bq)}.$ Moreover, $|\mu_2|=1$ if $\alpha=\frac{ab}{\beta(a+bq)}$ and $|\mu_2|>1$ if $\alpha>\frac{ab}{\beta(a+bq)}.$ \\
Of course, there exists $\alpha$ satisfying the last inequality, because, by Remark  \ref{rem1}, $b\leq\beta$ and from this one has $\frac{ab}{\beta(a+bq)}\leq\frac{ab}{\beta a}=\frac{b}{\beta}\leq1.$\\
\textbf{Step-2}. Now, we consider the condition $|\mu_3|<1.$
$$|\mu_3|<1 \Leftrightarrow 0<a+b-q\alpha\beta+\sqrt{D}<4.$$
If $q\alpha\beta-a-b<0$ i.e., $\alpha<\frac{a+b}{q\beta}$ then left inequality holds true. If $\alpha\geq\frac{a+b}{q\beta}$ then from  $q\alpha\beta-a-b<\sqrt{D}$ one has $\alpha>\frac{ab}{\beta(a+bq)}.$ Thus, the inequality $0<a+b-q\alpha\beta+\sqrt{D}$ is always true. Consequently, we have to check $\sqrt{D}<4-(a+b-q\alpha\beta).$ Note that $4-(a+b-q\alpha\beta)>0.$
$$\sqrt{D}<4-(a+b-q\alpha\beta) \Leftrightarrow 4a\alpha\beta+(b-a+q\alpha\beta)^2<16-8(a+b-q\alpha\beta)+(a+b-q\alpha\beta)^2$$ $$\Leftrightarrow a\alpha\beta+2(a+b-q\alpha\beta)-b(a-q\alpha\beta)-4<0 \Leftrightarrow$$ $$\Leftrightarrow a<\frac{4+2q\alpha\beta-2b-bq\alpha\beta}{2-b+\alpha\beta}=\frac{(2-b)(2+q\alpha\beta)}{2-b+\alpha\beta}<2+q\alpha\beta.$$
But last inequality and therefore  $|\mu_3|<1$ always holds.
This completes the proof of the lemma.
\end{proof}

Let $\bar{x}$ be a fixed point. Then (see \cite{Galor})

$$E^s(\bar{x})=\emph{{\rm span}\{eigenvectors associated with eigenvalues
of modulus$<1$\}}$$ is called a stable eigenspace

$$E^u(\bar{x})=\emph{{\rm span}\{eigenvectors associated with eigenvalues
of modulus$>1$\}}$$ is called an unstable eigenspace

$$E^c(\bar{x})=\emph{{\rm span}\{eigenvectors associated with eigenvalues
of modulus$=1$\}}$$ is called a center eigenspace.

Using Lemma \ref{lem3} and \cite{Galor} we  define the followings  about dimensions of eigenspaces:
$$\text{dim} E^s(\Lambda(\alpha))=1, \text{dim} E^c(\Lambda(\alpha))=2, \text{dim} E^u(\Lambda(\alpha))=0, \ \ \text{if} \ \ \alpha=\frac{ab}{\beta(a+bq)},$$
$$\text{dim} E^s(\Lambda(\alpha))=2, \text{dim} E^c(\Lambda(\alpha))=1, \text{dim} E^u(\Lambda(\alpha))=0, \ \ \text{if} \ \ \alpha<\frac{ab}{\beta(a+bq)},$$
$$\text{dim} E^s(\Lambda(\alpha))=1, \text{dim} E^c(\Lambda(\alpha))=1, \text{dim} E^u(\Lambda(\alpha))=1, \ \ \text{if} \ \ \alpha>\frac{ab}{\beta(a+bq)},$$

Moreover, if $\bar{x}$ is a fixed point, then from the center manifold theory (\cite{George}),  there are the following invariant manifolds:\\
locally stable manifold $W_{loc}^s$ tangent to $E^s$ at $\bar{x};$\\
locally center manifold $W_{loc}^c$ tangent to $E^c$ at $\bar{x};$\\
locally unstable manifold $W_{loc}^u$ tangent to $E^u$ at $\bar{x}.$ \\
locally center-stable manifold $W_{loc}^{cs}$ tangent to $E^c\oplus E^s$ at $\bar{x};$\\
locally center-unstable manifold $W_{loc}^{cu}$ tangent to $E^c\oplus E^u$ at $\bar{x};$\\

The stable and unstable manifolds are unique, but center, center-stable, and center-unstable manifolds may not be unique \cite{Galor}.

\section{The limit points of trajectories}

In this section we study the limit behavior of trajectories of initial point $\lambda^{0}\in S^3$ under
 operator (\ref{disc}), i.e. the sequence $V^n(\lambda^{0})$, $n\geq 1$.

\begin{pro} If $\beta=0$ then for any initial point $\lambda^{0}=\left(x^{0},u^{0},y^{0},v^{0}\right)\in S^{3}$
(except fixed point) the trajectory has the following limit
\[
\lim_{n\to\infty}V^{(n)}(\lambda^{0})=(x^0;0;0;1-x^0)
\]
\end{pro}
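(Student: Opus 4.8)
The plan is to exploit the fact that the susceptible fraction $x$ is frozen when $\beta=0$, and then to read off the remaining coordinates from the regularity already established in Proposition \ref{r} (equivalently, from Remark \ref{ff}) together with the simplex constraint $x+y+u+v=1$.

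First I would put $\beta=0$ in (\ref{disc}). The first equation collapses to $x^{(1)}=x$, so by induction $x^{(n)}=x^{0}$ for all $n\geq0$: in this degenerate regime the susceptible fraction is conserved along every trajectory. Next, by Proposition \ref{r} the operator $V$ is regular under (\ref{abb}), and Remark \ref{ff} then gives $\lim_{n\to\infty}y^{(n)}=\lim_{n\to\infty}u^{(n)}=0$ independently of the initial point. Finally, writing $v^{(n)}=1-x^{(n)}-y^{(n)}-u^{(n)}=1-x^{0}-y^{(n)}-u^{(n)}$ we obtain $\lim_{n\to\infty}v^{(n)}=1-x^{0}$. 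Hence $\lim_{n\to\infty}V^{n}(\lambda^{0})=(x^{0},0,0,1-x^{0})=\Lambda(x^0)$, the fixed point in (\ref{fix}). (The hypothesis ``except fixed point'' is in fact not needed: if $\lambda^{0}\in{\rm Fix}(V)$ then $\lambda^{0}=(x^{0},0,0,1-x^{0})$ and the statement is trivial.)

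If one prefers an argument that does not invoke Proposition \ref{r}, the recursions with $\beta=0$ can be solved explicitly: $y^{(n)}=(1-a)^{n}y^{0}\to0$ since $a\in(0,1]$; then $u^{(n+1)}=(1-b)u^{(n)}+a(1-a)^{n}y^{0}$ integrates to $u^{(n)}=(1-b)^{n}u^{0}+ay^{0}\sigma_{n}$ with $\sigma_{n}=\frac{(1-a)^{n}-(1-b)^{n}}{b-a}$ for $a\neq b$ and $\sigma_{n}=n(1-a)^{n-1}$ for $a=b$; in either case $\sigma_{n}\to0$, so $u^{(n)}\to0$, and again $v^{(n)}\to1-x^{0}$ by the constraint.

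I do not expect any serious obstacle: the only point requiring a little care is that we use $a,b>0$ (from (\ref{cont})), since for $a=0$ the coordinate $y^{(n)}=y^{0}$ would not tend to $0$ and the statement would fail; the standing assumptions give $a,b\in(0,1]$, so the conclusion is safe. Everything else is immediate from the structure of (\ref{disc}) at $\beta=0$.
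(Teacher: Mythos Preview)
Your proof is correct. The main difference from the paper is that your primary argument outsources the convergence of $y^{(n)}$ and $u^{(n)}$ to Proposition~\ref{r} and Remark~\ref{ff}, whereas the paper argues directly and self-containedly: it observes $x^{(n)}=x^{0}$ and $y^{(n)}=(1-a)^{n}y^{0}\to 0$, then uses the monotonicity $v^{(n+1)}\ge v^{(n)}$ together with the simplex constraint to conclude that $u^{(n)}$ converges, and finally passes to the limit in $u^{(n+1)}=(1-b)u^{(n)}+ay^{(n)}$ to identify $\bar u=0$. Your shortcut is cleaner and entirely legitimate since Proposition~\ref{r} covers $\beta=0$; your alternative explicit solution of the $u$-recursion is more than the paper does but of course also valid. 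Your remark that the ``except fixed point'' clause is unnecessary, and your observation that $a,b>0$ is essential, are both accurate.
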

\begin{proof} If $\beta=0$ then $x^{(n)}=x^0, \ \ y^{(n)}=(1-a)^ny^0\rightarrow0.$ Since $v^{(n+1)}\geq v^{(n)},$ for any $n\in N,$ one has the sequence $v^{(n)}$ has limit as a bounded increasing sequence. Moreover, $u^{(n)}=1-x^{(n)}-y^{(n)}-v^{(n)}$ so the sequence $u^{(n)}$ also has a limit $\bar{u}.$ We get a limit from $u^{(n+1)}=u^{(n)}(1-b)+ay^{(n)}$ and from positiveness of $b$ one has $\bar{u}=0.$ Thus, the proof is completed.
\end{proof}
\begin{rk} For further results, we assume that $\beta>0$ and the conditions of the Proposition \ref{parcon} are satisfied.
\end{rk}
\begin{lem}\label{lem1} The sets
$$X=\{(x, y, u)\in [0,1]^3 : x=0, y+u\leq 1\},$$
$$Y=\{(x, y, u)\in [0,1]^3 : x\leq 1, y=u=0\},$$
$$M=\{(x, y, u)\in [0,1]^3 : x+y+u\leq 1, ay-\beta x(u+qy)> 0, bu-ay> 0\}$$
are invariant sets with respect to operator (\ref{eq2}).
\end{lem}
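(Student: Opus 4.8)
The plan is to verify the three inclusions $W(X)\subseteq X$, $W(Y)\subseteq Y$, $W(M)\subseteq M$ directly from the formulas (\ref{eq2}); the first two are immediate and only the third requires work. Throughout I would abbreviate $S=u+qy$ and introduce
$$P=ay-\beta x(u+qy),\qquad Q=bu-ay,$$
so that $W$ in (\ref{eq2}) reads $x^{(1)}=x(1-\beta S)$, $y^{(1)}=y-P$, $u^{(1)}=u-Q$, and $M$ is exactly the set of $(x,y,u)\in[0,1]^3$ with $x+y+u\le1$, $P>0$ and $Q>0$. A preliminary remark, used repeatedly, is that
$$\beta S=\beta u+\beta qy\le u+y\le x+y+u\le1$$
(using $\beta\le1$, $\beta q\le1$ and the simplex bound), so in particular $x^{(1)}=x(1-\beta S)\ge0$. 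Now for $X$: if $x=0$ then $x^{(1)}=0$, $y^{(1)}=(1-a)y\ge0$, $u^{(1)}=(1-b)u+ay\ge0$, and $y^{(1)}+u^{(1)}=y+u-bu\le y+u\le1$, so the image lies in $X$. For $Y$: if $y=u=0$ then $x^{(1)}=x\le1$ and $y^{(1)}=u^{(1)}=0$, so the image lies in $Y$.

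For $M$ I would first check that the image stays in $[0,1]^3$ with coordinate sum $\le1$: all three new coordinates are $\ge0$ by the displayed formulas (using $a,b\le1$ and $\beta S\le1$), and $x^{(1)}+y^{(1)}+u^{(1)}=(x+y+u)-bu\le x+y+u\le1$. It then remains to preserve the two strict inequalities, and this is where the two key identities enter. Writing $P^{(1)},Q^{(1)}$ for $P,Q$ evaluated at $(x^{(1)},y^{(1)},u^{(1)})$ and using $y^{(1)}=y-P$, $u^{(1)}=u-Q$, $u^{(1)}+qy^{(1)}=S-Q-qP$, a short computation gives $Q^{(1)}=bu^{(1)}-ay^{(1)}=(1-b)Q+aP$, and
\begin{align*}
P^{(1)}&=a(y-P)-\beta x(1-\beta S)(S-Q-qP)\\
&=\big(1-a+q\beta x(1-\beta S)\big)P+\beta x(1-\beta S)\,Q+\beta^{2}xS^{2},
\end{align*}
the last equality after expanding and substituting $ay-\beta xS=P$. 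Under (\ref{abb}) every coefficient on the right-hand side of both identities is nonnegative ($1-a\ge0$, $1-b\ge0$, $1-\beta S\ge0$, $q\ge0$), so $P>0$ and $Q>0$ force $P^{(1)}>0$ and $Q^{(1)}>0$; hence $W(M)\subseteq M$.

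I expect the main (though still routine) obstacle to be the bookkeeping in the identity for $P^{(1)}$: one must expand $\beta x(1-\beta S)(S-Q-qP)$, collect the coefficients of $P$ and of $Q$ separately, and recognise the leftover term $\beta^{2}xS^{2}$ together with the cancellation produced by $ay-\beta xS=P$; the identity for $Q^{(1)}$ is a one-line substitution by comparison. The only genuinely delicate point is the degenerate face $x=0$, where the formula above collapses to $P^{(1)}=(1-a)P$, so that strict positivity there is argued from $a<1$; elsewhere $\beta^{2}xS^{2}>0$ (note $S\ge u>0$ whenever $Q>0$) already secures $P^{(1)}>0$.
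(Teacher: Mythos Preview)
Your argument is correct and follows essentially the same route as the paper: the cases $X$ and $Y$ are immediate, and for $M$ both you and the paper derive the identity $Q^{(1)}=(1-b)Q+aP$ and an identity expressing $P^{(1)}$ as a nonnegative combination of $P$, $Q$ and a nonnegative remainder (the paper writes the remainder as $\beta^{2}xS\,(u^{(1)}+qy^{(1)})$, which is algebraically the same as your $\beta^{2}xS^{2}$ after expanding $u^{(1)}+qy^{(1)}=S-Q-qP$). Your write-up is in fact slightly more thorough, since you verify preservation of the simplex bound and flag the boundary case $x=0$ where strict positivity of $P^{(1)}$ needs $a<1$.
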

\begin{proof} The invariantness of $X$ and $Y$ are straightforward.
To show invariantness of $M$, for shortness denote
$$A=ay-\beta x(u+qy), \ \ B=bu-ay.$$

Then first we rewrite $bu^{(1)}-ay^{(1)}$ as
$$bu^{(1)}-ay^{(1)}=b(u-bu+ay)-a(y-ay+\beta x(u+qy))$$
\begin{equation}\label{A}=b(u-B)-a(y-A)=(1-b)B+aA.\end{equation}

Next rewrite $ay^{(1)}-\beta x^{(1)}(u^{(1)}+qy^{(1)})$ as
$$
ay^{(1)}-\beta x^{(1)}(u^{(1)}+qy^{(1)})=a(y-ay+\beta x(u+qy))$$
$$-\beta(x-\beta x(u+qy))(u-bu+ay+q(y-ay+\beta x(u+qy)))$$
$$=a(y-A)-\beta(x-\beta x(u+qy))(u-B+q(y-A))$$
$$=ay-\beta x(u+qy)-aA+\beta x(B+qA)+\beta^{2} x(u+qy)(u^{(1)}+qy^{(1)})$$
$$=(1-a)A+\beta x(B+qA)+\beta^{2} x(u+qy)(u^{(1)}+qy^{(1)}).$$
Since $u^{(1)}\geq0, y^{(1)}\geq0$ and by the conditions for the parameters (by Proposition \ref{parcon}) in the last expression, each term is non-negative, so this completes the proof of the lemma.


\end{proof}

\begin{rk} Note that the  invariant set $Y$ belongs to the intersection of two surfaces $ay-\beta x(u+qy)=0, bu-ay=0$ and the extension of $Y$ to a simplex coincides with the set of fixed points $\Lambda(x)=(x;0;0;1-x), x\in [0,1].$
 \end{rk}

 \begin{lem}\label{lem2} For any initial point $(x^0, y^0, u^0)\notin M,$ there exists $k\geq 2$ such that $(x^{(k)}, y^{(k)}, u^{(k)})\in M.$
 \end{lem}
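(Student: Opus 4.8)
The plan is to follow the two quantities $A^{(n)}:=ay^{(n)}-\beta x^{(n)}(u^{(n)}+qy^{(n)})$ and $B^{(n)}:=bu^{(n)}-ay^{(n)}$ along the orbit $(x^{(n)},y^{(n)},u^{(n)})=W^{n}(x^{0},y^{0},u^{0})$, because membership in $M$ is exactly the conjunction $A^{(n)}>0$ and $B^{(n)}>0$ (the inequality $x^{(n)}+y^{(n)}+u^{(n)}\le1$ being automatic). The computations already carried out in the proof of Lemma~\ref{lem1} give
\[
B^{(n+1)}=(1-b)B^{(n)}+aA^{(n)},\qquad A^{(n+1)}=(1-a+\beta qx^{(n)})A^{(n)}+\beta x^{(n)}B^{(n)}+R^{(n)},
\]
with $R^{(n)}=\beta^{2}x^{(n)}(u^{(n)}+qy^{(n)})(u^{(n+1)}+qy^{(n+1)})\ge0$, together with the identities $y^{(n+1)}=y^{(n)}-A^{(n)}$ and $u^{(n+1)}=u^{(n)}-B^{(n)}$. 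First I would dispose of the degenerate orbits: those lying on $Y$ are fixed (so the statement is read for initial points not on the fixed-point line and not of the form $(0,0,u^{0})$), while for $x^{0}=0$ with $y^{0}>0$ one substitutes the explicit solution on the invariant set $X$ (where $y^{(n)}=(1-a)^{n}y^{0}$ and $u^{(n)}$ solves an elementary linear recursion) to see that $A^{(n)}=ay^{(n)}>0$ for all $n$ while $B^{(n)}>0$ for all large $n$, so the orbit enters $M$. Hence one may assume $x^{0}>0$ and, after discarding finitely many steps, $y^{0}>0$, $u^{0}>0$; we also use the range $0<a,b<1$ from Remark~\ref{rem1}.

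Under these assumptions $x^{(n)}$ is decreasing, so $x^{(n)}\downarrow x^{*}\ge0$, while $y^{(n)},u^{(n)}\to0$ by Proposition~\ref{r}. The first key step is $x^{*}>0$: otherwise, for $n$ large the linear map $(y,u)\mapsto(y^{(n+1)},u^{(n+1)})$, whose matrix is $\left(\begin{smallmatrix}1-a+\beta qx^{(n)}&\beta x^{(n)}\\a&1-b\end{smallmatrix}\right)$, has spectral radius arbitrarily close to $1-\min\{a,b\}<1$, forcing geometric decay of $(y^{(n)},u^{(n)})$; then $\sum_{n}(u^{(n)}+qy^{(n)})<\infty$, and $x^{(n)}=x^{0}\prod_{j<n}\bigl(1-\beta(u^{(j)}+qy^{(j)})\bigr)$ would tend to a strictly positive limit, a contradiction. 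Consequently $\sum_{n}(u^{(n)}+qy^{(n)})<\infty$ (now from $x^{*}>0$), so the orbit is an asymptotically autonomous nonnegative linear recursion whose limit matrix $M_{*}=\left(\begin{smallmatrix}1-a+\beta qx^{*}&\beta x^{*}\\a&1-b\end{smallmatrix}\right)$ is primitive, with eigenvalues the $\mu_{2}(x^{*})\ge\mu_{3}(x^{*})$ of \eqref{eigv} with $\alpha=x^{*}$, strictly positive Perron eigenvector $v_{2}=(1,s_{2})$, $s_{2}=\dfrac{\sqrt{D}-(b-a+\beta qx^{*})}{2\beta x^{*}}>0$, and subdominant eigenvector of mixed sign, and whose matrices differ from $M_{*}$ by a summable perturbation.

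Next I would rule out $\mu_{2}(x^{*})\ge1$, i.e. $x^{*}\ge\frac{ab}{\beta(a+bq)}$ (Lemma~\ref{lem3}). Since $x^{(n)}\ge x^{*}$ and $\mu_{2}(\cdot)$ is nondecreasing (a direct check from \eqref{eigv}), each $M_{x^{(n)}}$ then has Perron root $\ge1$; since $(y^{(n)},u^{(n)})$ stays in the open positive quadrant — which avoids the subdominant eigendirection of every $M_{x^{(n)}}$ — the coordinate of $(y^{(n)},u^{(n)})$ along the left Perron vectors $\ell_{n}$ of $M_{x^{(n)}}$ is comparable to $\|(y^{(n)},u^{(n)})\|$, satisfies $\ell_{n}\!\cdot w_{n+1}\ge\ell_{n}\!\cdot w_{n}$, and, using $\sum_{n}\|\ell_{n+1}-\ell_{n}\|<\infty$ (a consequence of the summable perturbation, which in turn comes from $x^{*}>0$), stays bounded away from $0$ — contradicting $(y^{(n)},u^{(n)})\to0$. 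Hence $\mu_{2}(x^{*})<1$, so $x^{*}<\frac{ab}{\beta(a+bq)}$; then $(y^{(n)},u^{(n)})$ decays geometrically and, by the summable-perturbation/primitivity argument, its direction converges to $v_{2}$, that is $u^{(n)}/y^{(n)}\to s_{2}$.

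Finally, the same kind of algebraic rearrangement and squaring used in the proof of Lemma~\ref{lem3} shows that $x^{*}<\frac{ab}{\beta(a+bq)}$ is equivalent to each of $bs_{2}>a$ and $a>\beta x^{*}(s_{2}+q)$; hence for all large $n$
\[
B^{(n)}=y^{(n)}\Bigl(b\,\tfrac{u^{(n)}}{y^{(n)}}-a\Bigr)>0,\qquad A^{(n)}=y^{(n)}\Bigl(a-\beta x^{(n)}\bigl(\tfrac{u^{(n)}}{y^{(n)}}+q\bigr)\Bigr)>0,
\]
so $(x^{(n)},y^{(n)},u^{(n)})\in M$, giving the required $k$ (which may be taken $\ge2$ by the invariance of $M$). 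The routine ingredients — the spectral‑radius estimate, the explicit solution on $X$, the monotonicity of $\mu_{2}$, and the closing algebraic identities — are easy; the two places where real care is needed are the asymptotic analysis of the non‑autonomous nonnegative recursion (convergence of the direction of $(y^{(n)},u^{(n)})$ to the Perron eigenvector of the limit matrix, and the handling of the marginal value $\mu_{2}(x^{*})=1$ via summability of the perturbation), which is the genuine obstacle of the argument.
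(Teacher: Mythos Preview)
Your argument is essentially correct and takes a genuinely different route from the paper's. The paper proceeds by a five-line contradiction: assuming the orbit never enters $M$, one has $A^{(n)}\le0$ \emph{or} $B^{(n)}\le0$ for every $n$; since $y^{(n+1)}=y^{(n)}-A^{(n)}$ and $u^{(n+1)}=u^{(n)}-B^{(n)}$, the paper then asserts that ``$y^{(n)}$ or $u^{(n)}$ is an increasing sequence'', contradicting $y^{(n)},u^{(n)}\to0$ from Remark~\ref{ff}. This is dramatically shorter, but the passage from ``at each step one of $y,u$ does not decrease'' to ``one of the two sequences is globally nondecreasing'' is not justified as written, so your Perron--Frobenius route is not merely an alternative but actually closes a gap.

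What your approach buys: the analysis of the non-autonomous positive linear recursion $w_{n+1}=M_{x^{(n)}}w_n$ with summable perturbation (i) yields $x^*>0$ and $x^*<\dfrac{ab}{\beta(a+bq)}$ directly --- the latter is only obtained in the paper as part of Theorem~\ref{th1}, \emph{after} invoking Lemma~\ref{lem2}; and (ii) explains geometrically why the orbit eventually enters $M$, via convergence of the direction of $(y^{(n)},u^{(n)})$ to the Perron eigenvector of $M_{x^*}$, together with the neat algebraic equivalence of $x^*<\dfrac{ab}{\beta(a+bq)}$ with both $bs_2>a$ and $a>\beta x^*(s_2+q)$. The costs are the heavier machinery (Birkhoff contraction / summable-perturbation arguments for products of positive matrices, which you rightly flag as the delicate step), the restriction to $0<a,b<1$, and the need to exclude the degenerate orbits on $Y$ and on $\{x=0,\,y=0\}$ --- exclusions that the paper's statement also requires but does not make explicit.
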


 \begin{proof} Suppose the opposite, for any initial point $(x^0, y^0, u^0)\notin M,$ and for any $n\in N,$  $(x^{(n)}, y^{(n)}, u^{(n)})\notin M.$ It means that
 \begin{equation}\label{s1}
ay^{(n)}-\beta x^{(n)}(u^{(n)}+qy^{(n)})\leq 0\ \ \mbox{or} \ \
bu^{(n)}-ay^{(n)}\leq 0.
\end{equation}
in addition,  by the operator (\ref{disc}) one has

 \begin{equation}\label{s2}
\begin{cases}
y^{(n+1)}=y^{(n)}-[ay^{(n)}-\beta x^{(n)}(u^{(n)}+qy^{(n)})]\\[2mm]
u^{(n+1)}=u^{(n)}-[bu^{(n)}-ay^{(n)}]
\end{cases}
\end{equation}
 from the systems (\ref{s1})  and (\ref{s2})  we have that $y^{(n)}$  or $u^{(n)}$ increasing sequence, but by Remark \ref{ff} both sequences converges to zero. This contradiction completes the proof.
 \end{proof}


\begin{thm}\label{th1} Assume (\ref{abb}) is satisfied. Then for any initial point $\lambda^{0}=\left(x^{0},u^{0},y^{0},v^{0}\right)\in S^{3}$
(except fixed point) the trajectory has the following limit
\[
\lim_{n\to\infty}V^{(n)}(\lambda^{0})=(\bar{x};0;0;1-\bar{x})
\]
where $\bar{x}$ depends on parameters and initial point $\lambda^0$ and $\bar{x}<\frac{ab}{\beta(a+bq)}.$
\end{thm}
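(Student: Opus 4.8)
The plan is to combine the global convergence already established in Proposition \ref{r} with the local analysis of the fixed point line $\Lambda(x)$ carried out in Section~3, and to use the invariant sets of Lemma~\ref{lem1} together with the capturing result of Lemma~\ref{lem2} to pin down \emph{which} fixed point is the limit. By Proposition~\ref{r} every trajectory converges, and by Remark~\ref{ff} the limit has the form $(\bar x;0;0;1-\bar x)=\Lambda(\bar x)$ for some $\bar x\in[0,1]$ depending on $\lambda^0$; since $x^{(n)}$ is non-increasing, $\bar x\le x^0$. Thus the only thing left to prove is the strict bound $\bar x<\frac{ab}{\beta(a+bq)}$.

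First I would dispose of the degenerate cases. If $x^0=0$ the trajectory stays in the invariant set $X$ of Lemma~\ref{lem1}, where $x^{(n)}\equiv 0$, so $\bar x=0<\frac{ab}{\beta(a+bq)}$ (the right side is positive since $a,b>0$, $\beta>0$). If $(x^0,y^0,u^0)\in Y$ then $\lambda^0$ is already a fixed point, which is excluded. So assume $x^0>0$ and $y^0+u^0>0$. Next, by Lemma~\ref{lem2} there is a time $k\ge 2$ with $(x^{(k)},y^{(k)},u^{(k)})\in M$, i.e. $ay^{(k)}-\beta x^{(k)}(u^{(k)}+qy^{(k)})>0$ and $bu^{(k)}-ay^{(k)}>0$; by the invariance of $M$ these strict inequalities persist for all $n\ge k$. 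Using the trajectory formulas~(\ref{tr}), on this range $y^{(n)}$ is strictly decreasing and $u^{(n)}$ is strictly decreasing, and more importantly $x^{(n+1)}=x^{(n)}-\beta x^{(n)}(u^{(n)}+qy^{(n)})$ with $u^{(n)}+qy^{(n)}>0$ for all $n\ge k$ (indeed $u^{(n)}>\frac{a}{b}y^{(n)}\ge 0$, and if $u^{(n)}=y^{(n)}=0$ the inequality $ay^{(n)}-\beta x^{(n)}(u^{(n)}+qy^{(n)})>0$ fails), so $x^{(n)}$ is \emph{strictly} decreasing from time $k$ on, hence $\bar x<x^{(k)}\le x^0$.

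To upgrade this strict decrease into the quantitative bound, I would argue by contradiction: suppose $\bar x\ge \frac{ab}{\beta(a+bq)}$. Then $\bar x>\frac{ab}{\beta(a+bq)}$ is impossible to rule out so easily, so the cleaner route is to look at the linearization at the limit. Write $\delta_n=y^{(n)}+u^{(n)}\to 0$ and note from~(\ref{tr}) that $\delta_{n+1}=\delta_n - bu^{(n)} + \beta x^{(n)}(u^{(n)}+qy^{(n)})$; near the limit $x^{(n)}\approx\bar x$, and the essential eigenvalue governing the decay of $(y^{(n)},u^{(n)})$ toward $0$ is $\mu_2(\bar x)$ from~(\ref{eigv}). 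By Lemma~\ref{lem3}, if $\bar x>\frac{ab}{\beta(a+bq)}$ then $|\mu_2(\bar x)|>1$, so the perturbation $(y^{(n)},u^{(n)})$ cannot converge to $0$ unless it lies exactly on the strong stable manifold $W^s_{loc}$ associated to $\mu_3$; but from time $k$ the trajectory lies in $M$, where both $y^{(n)}>0$ and $u^{(n)}>0$ together with $bu^{(n)}-ay^{(n)}>0$ and $ay^{(n)}-\beta x^{(n)}(u^{(n)}+qy^{(n)})>0$ force the tail to be transverse to that one-dimensional manifold (one checks the eigenvector for $\mu_3$ violates the sign conditions defining $M$). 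This contradiction forces $\bar x\le\frac{ab}{\beta(a+bq)}$. Finally, equality $\bar x=\frac{ab}{\beta(a+bq)}$ is excluded because at that value $|\mu_2(\bar x)|=1$ and the center-manifold dynamics in the $y$-$u$ directions is, on the set $M$ (where $A,B>0$), strictly repelling at second order — concretely, from~(\ref{A}) one has $bu^{(1)}-ay^{(1)}=(1-b)B+aA>B$ whenever $A>0$, so $B^{(n)}=bu^{(n)}-ay^{(n)}$ is non-decreasing along the tail, contradicting $B^{(n)}\to 0$ unless $A^{(n)}\to 0$ fast, and a parallel estimate on $A^{(n)}$ closes the loop — hence the limit cannot sit at the borderline value and $\bar x<\frac{ab}{\beta(a+bq)}$ strictly.

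The main obstacle is the last step: turning the eigenvalue information of Lemma~\ref{lem3} into a rigorous statement about the actual nonlinear trajectory. The subtlety is that $\Lambda$ is a whole curve of nonhyperbolic fixed points with a two-dimensional center subspace at the threshold, so one cannot simply invoke a hyperbolic stable-manifold theorem; instead one must exploit the explicit invariant set $M$ and the monotonicity relations derived from~(\ref{A}) (and its companion formula for $ay^{(1)}-\beta x^{(1)}(u^{(1)}+qy^{(1)})$) to show directly that, in the region $M$, the quantities $A^{(n)}=ay^{(n)}-\beta x^{(n)}(u^{(n)}+qy^{(n)})$ and $B^{(n)}=bu^{(n)}-ay^{(n)}$ cannot both decay to $0$ when $\bar x\ge\frac{ab}{\beta(a+bq)}$. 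I expect this to require a careful choice of Lyapunov-type combination of $A^{(n)}$, $B^{(n)}$ (with coefficients built from $\mu_2,\mu_3$) that is monotone along the tail, which is exactly where the algebra of~(\ref{eigv}) re-enters.
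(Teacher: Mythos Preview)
Your detour through the eigenvalue analysis and center manifolds is both incomplete (as you yourself flag) and unnecessary, and it contains an actual error: from~(\ref{A}) one has $B^{(1)}-(1-b)B=aA$, i.e.\ $B^{(1)}-B=aA-bB$, which is \emph{not} positive for all $A,B>0$, so your claim that $B^{(n)}$ is non-decreasing on $M$ is false. More importantly, you are missing the elementary observation that the paper's argument rests on: the set $M$ is already contained in the half-space $\{x<\tfrac{ab}{\beta(a+bq)}\}$. Indeed, if $(x,y,u)\in M$ then $A>0$ forces $y>0$, and $B>0$ gives $u>\tfrac{a}{b}y$; substituting into $ay>\beta x(u+qy)$ yields $ay>\beta x\bigl(\tfrac{a}{b}y+qy\bigr)=\beta x y\,\tfrac{a+bq}{b}$, and dividing by $y>0$ gives $x<\tfrac{ab}{\beta(a+bq)}$. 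This is exactly what the paper extracts from the computation of the intersection of the two boundary surfaces $ay-\beta x(u+qy)=0$ and $bu-ay=0$, which meets only along $\{y=u=0\}$ and along the vertical line $x=\tfrac{ab}{\beta(a+bq)},\ u=ay/b$.

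With this in hand the proof is immediate and purely elementary: by Lemma~\ref{lem2} there is $k$ with $(x^{(k)},y^{(k)},u^{(k)})\in M$, hence $x^{(k)}<\tfrac{ab}{\beta(a+bq)}$; since $x^{(n)}$ is non-increasing (your own observation from~(\ref{tr})), $\bar x\le x^{(k)}<\tfrac{ab}{\beta(a+bq)}$, and the strict inequality is already secured without any linearization, stable-manifold transversality check, or Lyapunov combination of $A^{(n)},B^{(n)}$. So drop the whole contradiction argument based on $\mu_2,\mu_3$ and replace it by the two-line algebra above; that matches the paper's route via Lemmas~\ref{lem1} and~\ref{lem2}.
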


\begin{proof} Existence of the limit is known by Proposition \ref{r}. First, we consider the intersection of two surfaces $ay-\beta x(u+qy)=0, bu-ay=0$ which are represented in the invariant set $M$ (Lemma \ref{lem1}). Solving the system of equations $ay-\beta x(u+qy)=0, bu-ay=0$ we have the parametric equations of two direct lines $x=x, y=0, u=0$ and $x=\frac{ab}{\beta(a+bq)}, y=y, u=ay/b.$ (as in Fig.\ref{fig3}).

 \begin{figure}[h!]
  \centering
  \includegraphics[width=10cm]{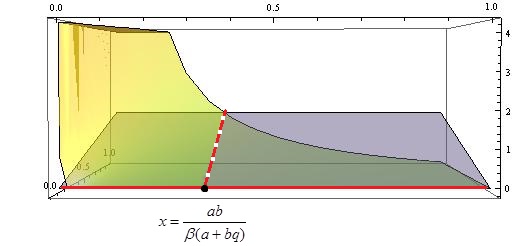}\\
  \caption{Red lines are intersection of surfaces $ay-\beta x(u+qy)=0, bu-ay=0$.}\label{fig3}
\end{figure}

 Based on Lemma \ref{lem1} and Lemma \ref{lem2} we have that $\bar{x}<\frac{ab}{\beta(a+bq)}.$ Biologically it means that total number of susceptible at the limit does not exceed $\frac{ab}{\beta(a+bq)}.$  In Fig. \ref{fig3} dotted red line represents the peaks of disease for initial points not in $M.$\\
Dependence of $\bar{x}$ to initial state and parameters can be represented as following:
from the last equation of the system (\ref{disc}) we find
$$u=\frac{v^{(1)}-v}{b} \Rightarrow u^{(1)}=\frac{v^{(2)}-v^{(1)}}{b},$$
substituting this into third equation we get
 $$y=\frac{v^{(2)}+(b-2)v^{(1)}+(1-b)v}{ab},$$
using them and second equation of the system one has
$$x=\frac{v^{(3)}+(a+b-3)v^{(2)}+(3+ab-2a-2b)v^{(1)}+(a+b-ab-1)v}{\beta(qv^{(2)}+(a+bq-2q)v^{(1)}+(q-bq-a)v)}.$$
Consequently, using $x+y+u+v=1$ we can find the following recurrence formula based on $v^{(n+3)}, v^{(n+2)}, v^{(n+1)}, v^{(n)}$:
$$abv^{(n+3)}=-\beta q\left(v^{(n+2)}\right)^2-\beta(2bq-4q+a+aq)v^{(n+2)}v^{(n+1)}-$$
$$-\beta[q(1-a)(1-b)+(q-a-bq)]v^{(n+2)}v^{(n)} -\beta(a+bq-2q)(a+b-2)\left(v^{(n+1)}\right)^2-$$
$$-\beta[(1-a)(1-b)(a+bq-2q)+(a+b-2)(q-a-bq)]v^{(n+1)}v^{(n)}-$$
$$-\beta(1-a)(1-b)(q-a-bq)\left(v^{(n)}\right)^2-ab[a+b-\beta q-3]v^{(n+2)}-$$
$$-ab[3+ab-2a-2b-\beta(a+bq-2q)]v^{(n+1)}-$$
$$-ab[a+b-ab-1-\beta(q-a-bq)]v^{(n)},$$
where for a given initial point $(x^0,y^0,u^0,v^0)$ using the system (\ref{disc}) one can define $v^{(1)}, v^{(2)}.$
This recursion gives dependence of $v^{(n)}$ and its limit on the initial point. But the problem of finding of an explicate formula
for $\bar x=\bar x(\lambda^{0}) $ remains open.
\end{proof}

\section{Discussion}
In this work, we proved that after some time the COVID-19 virus disappears (without counting mutation). But, if the disease transmission is large enough, then the big part of population will be infectious. Using computer analysis we have studied the disease spreading for concrete values of parameters. For Uzbekistan, we assumed that incubation period is 10 days ($a=0.1$),
infectious period is 15 days ($b=0.066$), transmission rate is $\beta=0.12$, scaling factor is $q=1$ and we get the following results:

(1) peak of the disease is 140 days from the start of the epidemic (real case: 138 days, August 4, 2020 \footnote{https://www.worldometers.info/coronavirus/country/uzbekistan/} ) Fig.\ref{fig1}.

(2) completion of the disease is 300 days from the start of the epidemic (real case: 290 days, January 5, 2021 \footnote{https://www.worldometers.info/coronavirus/country/uzbekistan/} ) Fig. \ref{fig2}.

Note that each coordinates of initial state must be multiplied to total population size $N$ of a considering country.
Finally, the model is working for COVID-19 disease without counting coronavirus mutation, (i.e., new stamps). One aspect of our future work is focused to apply this model and more realized mathematical models for the COVID-19 stamps.

\begin{figure}
  \centering
  \includegraphics[width=6cm]{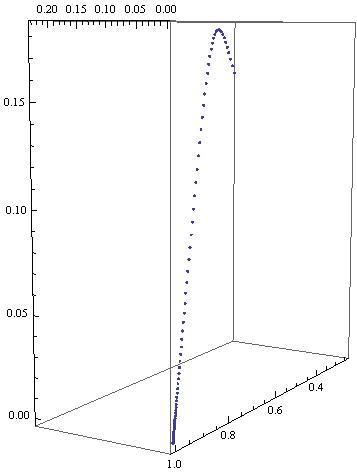}\\
  \caption{$a=0.1, b=0.66, \beta=0.12, q=1, x^{0}=0.99999, u^{0}=0.00001, y^0=0.0, n=140 \, (days)$.}\label{fig1}
\end{figure}

\begin{figure}
  \centering
  \includegraphics[width=6cm]{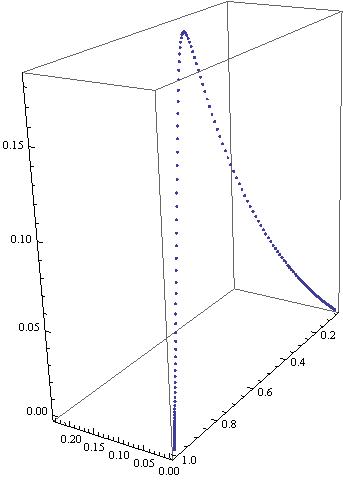}\\
  \caption{$a=0.1, b=0.66, \beta=0.12, q=1, x^{0}=0.99999, u^{0}=0.00001, y^0=0.0, n=300 \, (days)$.}\label{fig2}
\end{figure}

For any initial point (except fixed point) the trajectory located between surfaces $ay-\beta x(u+qy)=0$ and $bu-ay=0$ after the peak of the disease  (Fig.\ref{fig5}).

\begin{figure}
  \centering
  \includegraphics[width=10cm]{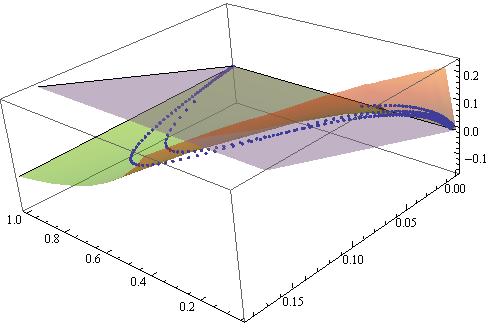}\\
   \caption{$a=0.1, b=0.66, \beta=0.12, q=1$.}\label{fig5}
\end{figure}

In \cite{RSb} we gave a
prediction of COVID-19 pandemic in Uzbekistan. There our method is an observation based projection similar than
the classic Mooreís Law in microelectronics \cite{Lu}.

\end{document}